\documentclass[a4paper]{article}
%%%%%%%%%%%%%%%%%%%%%%%%%%%%%%%%%%%%%%%%%%%%%%%%%%%%%%%%%%%%%%%%%%%%%%%%%%%%%%%%%%%%%%%%%%%%%%%%%%%%%%%%%%%%%%%%%%%%%%%%%%%%%%%%%%%%%%%%%%%%%%%%%%%%%%%%%%%%%%%%%%%%%%%%%%%%%%%%%%%%%%%%%%%%%%%%%%%%%%%%%%%%%%%%%%%%%%%%%%%%%%%%%%%%%%%%%%%%%%%%%%%%%%%%%%%%
\usepackage{amsmath}

\setcounter{MaxMatrixCols}{10}
%TCIDATA{OutputFilter=LATEX.DLL}
%TCIDATA{Version=5.50.0.2953}
%TCIDATA{<META NAME="SaveForMode" CONTENT="1">}
%TCIDATA{BibliographyScheme=Manual}
%TCIDATA{LastRevised=Monday, January 10, 2011 13:32:26}
%TCIDATA{<META NAME="GraphicsSave" CONTENT="32">}
%TCIDATA{CSTFile=article.cst}

\newtheorem{thm}{Theorem}

\newtheorem{lem}{Lemma}
\newtheorem{prob}{Problem}

\newtheorem{rmk}{Remark}
\newtheorem{example}{Example}
\newenvironment{proof}[1][Proof]{\noindent\textbf{#1.} }{\ \rule{0.5em}{0.5em}}
\numberwithin{equation}{section}

\begin{document}

\title{\textbf{On The Best Approximate Solutions of The Matrix Equation} $%
AXB=C$}
\author{Halim \"{O}zdemir, Murat Sarduvan \\
%EndAName
{\small Department of Mathematics, Sakarya University, TR54187 Sakarya,
Turkey}}
\date{}
\maketitle

\begin{abstract}
Suppose that the matrix equation $AXB=C$ with unknown matrix $X$ is given,
where $A$, $B$, and $C$\ are known matrices of suitable sizes. The matrix
nearness problem is considered over the general and least squares solutions
of the matrix equation $AXB=C$ when the equation is consistent and inconsistent,
respectively. The implicit form of the best approximate solutions of the
problems over the set of symmetric and the set of skew-symmetric matrices are
established as well. Moreover, some numerical examples are given for the
problems considered.
\end{abstract}

%%% ----------------------------------------------------------------------

\bigskip \textsl{2000 Mathematics Subject Classification:}{\small \ 15A06;
15A09; 15A24; 65F35}

\textsl{Keywords:\ }{\small Best approximate solution; Frobenius norm;
Matrix equations; Moore--Penrose generalized inverse; Least squares solution}

\section{Introduction and Notations}

Let $\mathcal{R}^{m\times n}$, $\mathcal{SR}^{n\times n}$, and $\mathcal{SSR}%
^{n\times n}$ be the set of $m\times n$ real matrices, the set of $n\times n$
\ real symmetric matrices, and the set of $n\times n$\ real skew-symmetric
matrices, respectively. The symbols $A^{T}$, $A^{{\dag }}$, and $\left\Vert
A\right\Vert $\ will denote the transpose, the Moore-Penrose generalized
inverse, and the Frobenius norm (see, for example, \cite{Horn}),
respectively, of a matrix $A\in {\mathcal{\ R}^{m\times n}}$. Further, $%
vec\left( \cdot \right) $ will stand for the $vec$ operator, i.e. $vec\left(
A\right) ={\left( {a_{1}^{T},a_{2}^{T},\ldots ,a_{n}^{T}}\right) ^{T}}$ for
the matrix $A=\left( {{a_{1}},{a_{2}},\ldots ,{a_{n}}}\right) \in {\mathcal{R%
}^{m\times n}}$, ${a_{i}}\in {R^{m\times 1}}$, $i=1,2,\ldots ,n,$ and $%
A\otimes B$ will stand for the Kronecker product of matrices $A$ and $B\in {%
\mathcal{R}^{m\times n}}$, (see \cite{Graybill}).

The well-known linear matrix equation $AXB=C$, where $A$, $B$, $C$ are known
matrices of suitable sizes and $X$ is the matrix of unknowns, were studied
in the case of special solution structures, e.g. symmetric, triangular or
diagonal solution $X$ in \cite{Bjerhammer, Chu, Dai, Don, Magnus, Morris} using matrix
decomposition such as the singular value decomposition (SVD), the
generalized SVD, the quotient SVD, and the canonical correlation
decomposition. In these literatures, the matrix equation $AXB=C$ is
consistent. But, it is rarely possible to satisfy the consistency condition
of the matrix equation $AXB=C$, since the matrices $A$, $B$, and $C$
occurring in practice are usually obtained from an experiment.

An iteration method to solve the linear matrix equation $AXB=C$ over the set
of symmetric matrices have constructed by Peng et al. \cite{YXPeng}. In
addition, Peng \cite{ZYPeng2} has established an iterative method to solve
the minimum Frobenius norm residual problem: $\min \left\Vert {AXB-C}%
\right\Vert $ where $X$ is the symmetric matrix of unknowns. Huang and Yin
solved the constrained inverse eigenproblem and associated approximation
problem for anti-Hermitian $R$-symmetric matrices and the matrix inverse
problem and its optimal approximation problem for $R$-symmetric matrices in
\cite{Huang1} and \cite{Huang2}, respectively. Huang et al. gave the precise
solutions to the minimum residual problem and the matrix nearness problem
for symmetric matrices or skew-symmetric matrices in \cite{Huang3} and
constructed an iterative method to solve the linear matrix equation $AXB=C$
over the set of skew-symmetric matrices in \cite{Huang4}.

This work is devoted to give the best approximate solutions of the following %
two problems, which are interesting and known as the matrix nearness problems, %
in an alternative way:

\begin{prob}
\label{prob1} For given matrices $A \in {\mathcal{R}^{m \times n}}$, $B \in {%
\ \mathcal{R}^{p \times r}}$, and $C \in {\mathcal{R}^{m \times r}}$, let ${%
S_G }$ be the set of all solutions of the consistent matrix equation
\[
AXB = C.
\]
For a given matrix $X_0 \in {\mathcal{R}^{n \times p}}$, find $\hat X \in {\
S_G}$ such that
\[
\left\| {\hat X - {X_0}} \right\| = \mathop {\min }\limits_{X \in {S_G}}
\left\| {X - {X_0}} \right\| .
\]
\end{prob}

\begin{prob}
\label{prob2} For given matrices $A\in {\mathcal{R}^{m\times n}}$, $B\in {\
\mathcal{R}^{p\times r}}$, and $C\in {\mathcal{R}^{m\times r}}$, let ${S_{E}}
$ be the set of all least squares solutions of the minimum residual problem%
\[
\mathop {\min }\limits_{X\in {\mathcal{R}^{n\times p}}}\left\Vert {AXB-C}%
\right\Vert .
\]
For a given matrix $X_{0}\in {\mathcal{R}^{n\times p}}$, find $\hat{X}\in {%
S_{E}}$ such that
\[
\left\Vert {\hat{X}-{X_{0}}}\right\Vert =\mathop {\min }\limits_{X\in {S_{E}}%
}\left\Vert {X-{X_{0}}}\right\Vert .
\]
\end{prob}

In fact, the Problems \ref{prob1} and \ref{prob2} are to find the best
approximate solution for a given matrix $X_{0}\in {\mathcal{R}^{n\times p}}$
over the set of general solutions (${S_{G}}$) and the least square solutions
(${S_{E}}$) of the matrix equation $AXB=C$, respectively. These problems are
known as the matrix nearness problem in the literature. The matrix nearness
problem is very important in applied sciences and has been extensively
studied in recent years (see, for example, \cite{Higham, Jeseph, Jiang, ZYPeng1}).
Therefore, it is important to give the best approximate solutions of the
problems in implicit forms.

In general, numerical algorithms or iteration methods for solving these
problems are suggested in most of the works mentioned above.

In this work, the implicit forms of the best approximate solutions to the
problems mentioned above have been obtained over the set of symmetric and
the set of skew--symmetric matrices using the Moore--Penrose generalized
inverse. Moreover, some numerical examples are given via Matlab 7.5. The
matrices in the examples have been taken from related reference work articles.

\section{Preliminary Results}

The vector $x_{0}\in {\mathcal{R}^{n\times 1}}$ is the best approximate
solution (BAS) to the inconsistent system of linear equations $Ax=g$, where $%
A\in {\mathcal{R}^{m\times n}}$, if and only if

\begin{description}
\item[1-] ${\left( {Ax-g}\right) ^{T}}\left( {Ax-g}\right) \geq {\left( {A{%
x_{0}}-g}\right) ^{T}}\left( {A{x_{0}}-g}\right) $ for all $x\in {\mathcal{R}%
^{n\times 1}}$,

\item[2-] ${x^{T}}x>{x_{0}}^{T}{x_{0}}$ for all $x\in {\mathcal{R}^{n\times
1}\backslash }\left\{ {x_{0}}\right\} $ satisfying \ \ ${\left( {Ax-g}%
\right) ^{T}}\left( {Ax-g}\right) ={\left( {A{x_{0}}-g}\right) ^{T}}\left( {A%
{x_{0}} -g}\right) $ \cite{Graybill}.
\end{description}

The vector $x_{0}\in {\mathcal{R}^{n\times 1}}$ is a least squares solution
(LSS) to the inconsistent system of linear equations $Ax=g$, where $A\in {%
\mathcal{R}^{n\times 1}}$, if and only if

\[
{\left( {Ax - g} \right)^T}\left( {Ax - g} \right) \ge {\left( {A{x_0} - g}
\right)^T}\left( {A{x_0} - g} \right)
\]
for all $x\in {\mathcal{R}^{n\times 1}}$ \cite{Graybill}.

It is noteworthy that there may be many LSS for an inconsistent system of
linear equations. In addition, an LSS may not be the BAS while the BAS is
always a LSS. However, the BAS is always unique.

We close this section by giving two auxiliary results related to the
problems mentioned earlier and which will be used in the rest of the work.

\begin{lem}
\label{lem:one} Suppose that ${S_{G^{\prime }}}$ is the set of all solutions
to the consistent system of linear equations $Ax=g$, where $A\in {\mathcal{R}%
^{m\times n}}$ is a known matrix, $g\in {\mathcal{R}^{m\times 1}}$ is a
known vector, and $x\in {\mathcal{R}^{n\times 1}}$ is the vector of
unknowns. For a given vector $x_{0}\in {\mathcal{R}^{n\times 1}}$, the
vector $\hat{x}\in {S_{G}^{\prime }}$ satisfying
\[
\left\Vert {\hat{x}-{x_{0}}}\right\Vert =\mathop {\min }\limits_{x\in {\
S_{G^{\prime }}}}\left\Vert {x-{x_{0}}}\right\Vert
\]%
is given by
\[
\hat{x}={A^{\dag }}g+\left( {I-{A^{\dag }}A}\right) {x_{0}}.
\]
\end{lem}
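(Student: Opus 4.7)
The plan is to reduce the minimization to a projection problem in $\mathbb{R}^{n\times 1}$ and exploit two standard properties of the Moore--Penrose inverse: first, that every solution of the consistent system $Ax=g$ can be written as $x = A^\dagger g + (I - A^\dagger A)y$ for some $y \in \mathbb{R}^{n\times 1}$; and second, that $I - A^\dagger A$ is the orthogonal projector onto the null space $N(A)$ (with $A^\dagger A$ the orthogonal projector onto $R(A^T) = N(A)^\perp$).

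First I would parametrize the feasible set, writing an arbitrary $x \in S_{G'}$ as $x = A^\dagger g + (I - A^\dagger A)y$. Then the quantity to minimize becomes
\[
\| x - x_0 \|^2 = \| A^\dagger g + (I - A^\dagger A)y - x_0 \|^2,
\]
where $y$ is free. Letting $u = (I - A^\dagger A)y$, the vector $u$ ranges over all of $N(A)$ as $y$ varies, so the problem becomes: minimize $\| A^\dagger g - x_0 + u \|^2$ over $u \in N(A)$.

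Next I would invoke the orthogonal decomposition. Since $A^\dagger g \in R(A^T) = N(A)^\perp$, we can split
\[
x_0 - A^\dagger g = (I - A^\dagger A)x_0 + \bigl( A^\dagger A\, x_0 - A^\dagger g \bigr),
\]
where the first summand lies in $N(A)$ and the second in $N(A)^\perp$. Pythagoras then yields
\[
\| u - (x_0 - A^\dagger g) \|^2 = \| u - (I - A^\dagger A)x_0 \|^2 + \| A^\dagger A\, x_0 - A^\dagger g \|^2,
\]
so the minimum over $u \in N(A)$ is attained uniquely at $u = (I - A^\dagger A)x_0$, giving $\hat{x} = A^\dagger g + (I - A^\dagger A)x_0$.

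I do not foresee a serious obstacle; the proof is essentially a bookkeeping exercise with Moore--Penrose projectors. The one point that must be handled with care is the justification that $(I - A^\dagger A)$ is genuinely an orthogonal projector onto $N(A)$ and that $A^\dagger g \perp N(A)$, both of which follow directly from the defining Penrose identities $(A^\dagger A)^T = A^\dagger A$ and $A A^\dagger A = A$. Consistency of the system is used only to ensure that $A A^\dagger g = g$, so that $A^\dagger g$ is itself a particular solution and the parametrization covers all of $S_{G'}$.
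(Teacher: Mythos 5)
Your proof is correct, and it shares the paper's skeleton: both begin by parametrizing $S_{G'}$ as $x={A^{\dag }}g+\left( {I-{A^{\dag }}A}\right) h$ and then minimize over the free parameter. The divergence is in how that minimization is finished. The paper recasts it as a second best-approximate-solution problem, $\left( {I-{A^{\dag }}A}\right) h={x_{0}}-{A^{\dag }}g$, and closes by citing Graybill's Theorem 7.4.1 for systems whose coefficient matrix is symmetric and idempotent; you instead do the minimization by hand, noting that $I-{A^{\dag }}A$ is the orthogonal projector onto $N(A)$, that ${A^{\dag }}g\in N(A)^{\perp }$, and splitting ${x_{0}}-{A^{\dag }}g$ into its $N(A)$ and $N(A)^{\perp }$ components so that Pythagoras identifies the minimizer. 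These are the same projection argument in different clothing, but yours is self-contained where the paper's key step is a black-box citation, and your explicit decomposition also makes visible the uniqueness of the minimizing $u$ in $N(A)$ (hence of $\hat{x}$), which the paper leaves implicit. Your closing remark that consistency is used only to guarantee $A{A^{\dag }}g=g$, so that the parametrization actually covers all of $S_{G'}$, is exactly the right hypothesis to isolate.
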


\begin{proof}
If $x\in {S_{G^{\prime }}}$, then it can be written in the form
\[
x={A^{\dag }}g+\left( {I-{A^{\dag }}A}\right) h
\]%
for some vector $h\in {\mathcal{R}^{n\times 1}}$ \cite[Theorem 6.3.2]{Graybill}.
Thus, the problem turns into the problem of finding the BAS $\hat{x}$ of the
system
\begin{equation}
{A^{\dag }}g+\left( {I-{A^{\dag }}A}\right) h={x_{0}}  \label{eq:x0}
\end{equation}%
or equivalently the system
\[
\left( {I-{A^{\dag }}A}\right) h={x_{0}}-{A^{\dag }}g.
\]%
Since the matrix ${I-{A^{\dag }}A}$ is symmetric and idempotent, it is
obtained
\[
\hat{h}=\left( {I-{A^{\dag }}A}\right) {x_{0}}
\]%
by Theorem 7.4.1 in \cite{Graybill}. Substituting this expression in the
equation (\ref{eq:x0}), we get
\[
\hat{x}={A^{\dag }}g+\left( {I-{A^{\dag }}A}\right) {x_{0}}.
\]%
So, the proof is completed.
\end{proof}

\begin{lem}
\label{lem:two} Let ${S_{E^{\prime }}}$ be the set of all least squares
solutions to the system of linear equations $Ax=g$ which do not need to be
consistent, where $A\in {\mathcal{R}^{m\times n}}$ is a known matrix, $g\in {%
\mathcal{R}^{m\times 1}}$ is a known vector, and $x\in {\mathcal{R}^{n\times
1}}$ is the vectors of unknowns. For a given vector $x_{0}\in {\mathcal{R}%
^{n\times 1}}$, the vector $\hat{x}\in {\ S_{E}^{\prime }}$ satisfying
\[
\left\Vert {\hat{x}-{x_{0}}}\right\Vert =\mathop {\min }\limits_{x\in {\
S_{E^{\prime }}}}\left\Vert {x-{x_{0}}}\right\Vert
\]%
is given by
\[
\hat{x}={A^{\dag }}g+\left( {I-{A^{\dag }}A}\right) {x_{0}}.
\]
\end{lem}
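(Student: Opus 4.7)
My plan is to mirror the proof of Lemma \ref{lem:one}, inserting one preliminary step to handle the fact that $Ax=g$ is no longer assumed consistent. The key observation is that $x$ is a least squares solution of $Ax=g$ if and only if it satisfies the normal equations $A^{T}Ax=A^{T}g$, and these are always consistent. Applying Theorem 6.3.2 of \cite{Graybill} to the normal equations, together with the standard identities $(A^{T}A)^{\dag}A^{T}=A^{\dag}$ and $\mathrm{null}(A^{T}A)=\mathrm{null}(A)=\mathrm{range}(I-A^{\dag}A)$, I will parametrize $S_{E^{\prime}}$ in the form
\[
x = A^{\dag}g + (I - A^{\dag}A)h, \qquad h \in \mathcal{R}^{n\times 1}.
\]

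Once this description is in place, the minimization problem reduces to finding the BAS $\hat{h}$ of the system
\[
(I - A^{\dag}A)h = x_{0} - A^{\dag}g,
\]
which is precisely the auxiliary system that appeared in the proof of Lemma \ref{lem:one}. Since $I-A^{\dag}A$ is symmetric and idempotent, Theorem 7.4.1 of \cite{Graybill} delivers $\hat{h}=(I-A^{\dag}A)(x_{0}-A^{\dag}g)$, and the identity $(I-A^{\dag}A)A^{\dag}=A^{\dag}-A^{\dag}AA^{\dag}=0$ collapses this to $\hat{h}=(I-A^{\dag}A)x_{0}$. Substituting back into the parametrization yields the claimed formula for $\hat{x}$.

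The only genuinely new ingredient compared with Lemma \ref{lem:one} is the parametrization of $S_{E^{\prime}}$, and I expect this to be the main point requiring care: Theorem 6.3.2 of \cite{Graybill} cannot be invoked directly for $Ax=g$ since this system may be inconsistent, so it must instead be applied to the normal equations, combined with the observation that $A^{\dag}g$ is a particular solution of those equations. After that step, the remainder of the argument is a verbatim repetition of the proof of Lemma \ref{lem:one}, which explains why the final formula is identical to the one obtained there.
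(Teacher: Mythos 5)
Your proof is correct and takes essentially the same route as the paper's: both parametrize the set $S_{E^{\prime}}$ of least squares solutions as $x=A^{\dag}g+\left(I-A^{\dag}A\right)h$ and then observe that the minimization reduces to exactly the auxiliary system already treated in the proof of Lemma \ref{lem:one}. The only difference is that you justify this parametrization explicitly via the (always consistent) normal equations $A^{T}Ax=A^{T}g$ and the identities $(A^{T}A)^{\dag}A^{T}=A^{\dag}$ and $(A^{T}A)^{\dag}(A^{T}A)=A^{\dag}A$, whereas the paper simply cites Graybill for it; this is a welcome filling-in of detail, not a different argument.
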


\begin{proof}
If $x\in {S_{E^{\prime }}}$, then it can be written in the form
\[
x={A^{\dag }}g+\left( {I-{A^{\dag }}A}\right) h
\]%
for some vector $h\in {\mathcal{R}^{n\times 1}}$ \cite[Theorem 6.3.2]{Graybill}.
This is of the same type with (\ref{eq:x0}). So, the remaining part of the
proof can be completed easily in a similar way as in the proof of Lemma \ref%
{lem:one}.
\end{proof}\\

It is noteworthy that the structures of $\hat x$ in Lemmas \ref{lem:one} and %
\ref{lem:two} are exactly the same.

\section{The Best Approximate Solutions of Problems 1 and 2}

If it is assumed that the matrix equation $AXB=C$, where $A\in {\mathcal{R}%
^{m\times n}}$, $B\in {\mathcal{R}^{p\times r}}$, $C\in {\mathcal{R}%
^{m\times r}}$ are known nonzero matrices and $X\in {\mathcal{R}^{n\times p}}
$ is the matrix of unknowns, is inconsistent, as was the system of linear
equations, then it may be asked to find a matrix $X$ such that $\left\Vert {%
AXB-C}\right\Vert $ is minimum, too. A matrix satisfying this condition is
called an approximate solution to the matrix equation. The matrix $\hat{X}%
\in {\mathcal{R}^{n\times p}}$ is defined to be the BAS to the matrix
equation $AXB=C$ if and only if

\begin{description}
\item[1-] $\left\Vert {AXB-C}\right\Vert \geq \left\Vert {A{\hat{X}}B-C}%
\right\Vert$ for all $X\in {\mathcal{R}^{n\times p}}$;

\item[2-] $\left\Vert {X}\right\Vert >\left\Vert {\hat{X}}\right\Vert $ for
all $X\in {\mathcal{R}^{n\times p}\backslash }\left\{ {\hat{X}}\right\} $
satisfying $\left\Vert {AXB-C}\right\Vert =\left\Vert {A{\hat{X}}B-C}%
\right\Vert $.
\end{description}

We note that a vector $k\in {\mathcal{R}^{mn\times 1}}$ will stand for the
vector $vec(K)$ in the rest of the text, where $K\in {\mathcal{R}^{m\times n}%
}$.

It is known that the matrix equation $AXB=C$ can be equivalently written as
\begin{equation}
\left( {{B^{T}}\otimes A}\right) x=c,  \label{eq:kron}
\end{equation}%
where $\left( {{B^{T}}\otimes A}\right) $ is the kronecker product (see, for
detail, \cite{Rao}). Consequently, the solutions of a matrix equation $AXB=C$
can be obtained considering the usual system of linear equations (\ref%
{eq:kron}) instead of the matrix equation $AXB=C$.

Now we can give the solutions of Problems \ref{prob1} and \ref{prob2} which
are the subjects of the following two theorems, respectively.

\begin{thm}
\label{thm:one} Let the matrix equation $AXB=C$ be consistent, where $A\in {%
\mathcal{R}^{m\times n}}$, $B\in {\mathcal{R}^{p\times r}}$, $C\in {\mathcal{%
R}^{m\times r}}$ are known nonzero matrices and and $X\in {\mathcal{R}%
^{n\times p}}$ is the matrix of unknowns. Then, for a given matrix $X_{0}\in
{\mathcal{R}^{n\times p}}$, the matrix $\hat{X}\in {S_{G}}$ satisfying
\[
\left\Vert {\hat{X}-{X_{0}}}\right\Vert =\mathop {min}\limits_{X\in {S_{G}}%
}\left\Vert {X-{X_{0}}}\right\Vert
\]
is given by
\[
\hat{X}={A^{\dag }}C{B^{\dag }}+{X_{0}}-{A^{\dag }}A{X_{0}}B{B^{\dag }}.
\]
\end{thm}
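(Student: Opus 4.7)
The plan is to reduce the matrix problem to the vector problem already solved in Lemma \ref{lem:one}, via vectorization. First I would use the standard identity $vec(AXB) = (B^T \otimes A)\, vec(X)$ to rewrite the matrix equation $AXB = C$ as the equivalent linear system $(B^T \otimes A)x = c$, where $x = vec(X)$ and $c = vec(C)$. Because the Frobenius norm satisfies $\|Y\| = \|vec(Y)\|$, the matrix nearness problem in Problem \ref{prob1} transforms into the vector nearness problem: minimize $\|x - x_0\|$ over solutions of $(B^T \otimes A)x = c$, where $x_0 = vec(X_0)$.

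Now I would apply Lemma \ref{lem:one} directly to this vectorized system, which gives
\[
\hat{x} = (B^T \otimes A)^{\dag} c + \bigl(I - (B^T \otimes A)^{\dag}(B^T \otimes A)\bigr) x_0.
\]
The key step is then to ``unvectorize'' the right-hand side back into a clean matrix expression in $A$, $B$, $C$, and $X_0$. For this I would use the Kronecker product identities $(B^T \otimes A)^{\dag} = (B^T)^{\dag} \otimes A^{\dag} = (B^{\dag})^T \otimes A^{\dag}$ together with $(P \otimes Q)\, vec(Y) = vec(QYP^T)$. The first term then becomes
\[
\bigl((B^{\dag})^T \otimes A^{\dag}\bigr)\, vec(C) = vec\bigl(A^{\dag} C B^{\dag}\bigr),
\]
and the composition $(B^T \otimes A)^{\dag}(B^T \otimes A)$ equals $(B B^{\dag})^T \otimes (A^{\dag} A)$, so the second term becomes
\[
vec(X_0) - vec\bigl(A^{\dag} A\, X_0\, B B^{\dag}\bigr).
\]

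Adding these and applying $vec^{-1}$ yields $\hat{X} = A^{\dag} C B^{\dag} + X_0 - A^{\dag} A\, X_0\, B B^{\dag}$, exactly the claimed formula. The main obstacle is really bookkeeping: one must be careful about the order of factors and the use of $(B^T)^{\dag} = (B^{\dag})^T$ when applying the Kronecker pseudoinverse identity. Beyond that, there is no conceptual difficulty, since Lemma \ref{lem:one} already encapsulates the minimization argument and the matrix $I - (B^T\otimes A)^{\dag}(B^T\otimes A)$ inherits symmetry and idempotency from the pseudoinverse construction.
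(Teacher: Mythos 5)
Your proof is correct and follows essentially the same route as the paper: both reduce the problem to the vectorized system $\left( {B^{T}\otimes A}\right) x=c$ and invoke Lemma \ref{lem:one}. The only cosmetic difference is that the paper starts from the matrix-form general solution $X={A^{\dag }}C{B^{\dag }}+H-{A^{\dag }}AHB{B^{\dag }}$ and vectorizes it, whereas you apply the lemma directly in vector form and then un-vectorize via the Kronecker identities ${\left( {{B^{T}}\otimes A}\right) ^{\dag }}=({B^{\dag }})^{T}\otimes {A^{\dag }}$ and $\left( P\otimes Q\right) vec\left( Y\right) =vec\left( QYP^{T}\right) $ --- the same computation read in the opposite direction.
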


\begin{proof}
If $X\in {S_{G}}$, then it can be written in the form
\begin{equation}
X={A^{\dag }}C{B^{\dag }}+H-{A^{\dag }}AHB{B^{\dag }}  \label{eq:10}
\end{equation}%
for some matrix $H\in {\mathcal{R}^{n\times p}}$ \cite{Rao}. Since the
statement (\ref{eq:10}) is equivalent to the statement
\[
x={\left( {{B^{T}}\otimes A}\right) ^{\dag }}c+\left[ {I-{{\left( {{B^{T}}%
\otimes A}\right) }^{\dag }}\left( {{B^{T}}\otimes A}\right) }\right] h,
\]%
the problem turns into the problem of finding the best approximate solution
of the usual system of linear equations
\[
{\left( {{B^{T}}\otimes A}\right) ^{\dag }}c+\left[ {I-{{\left( {{B^{T}}%
\otimes A}\right) }^{\dag }}\left( {{B^{T}}\otimes A}\right) }\right] h={%
x_{0}}
\]%
or equivalently
\begin{equation}
\left[ {I-{{\left( {{B^{T}}\otimes A}\right) }^{\dag }}\left( {{B^{T}}%
\otimes A}\right) }\right] h={x_{0}}-{\left( {{B^{T}}\otimes A}\right)
^{\dag }}c.  \label{eq:20}
\end{equation}%
Using Lemma \ref{lem:one} and (\ref{eq:20}), we get
\[
\hat{x}={\left( {{B^{T}}\otimes A}\right) ^{\dag }}c+\left[ {I-{{\left( {{\
B^{T}}\otimes A}\right) }^{\dag }}\left( {{B^{T}}\otimes A}\right) }\right] {%
x_{0}}
\]%
or, in the matrix form,
\[
\hat{X}={A^{\dag }}C{B^{\dag }}+{X_{0}}-{A^{\dag }}A{X_{0}}B{B^{\dag }}.
\]%
So, the proof is completed.
\end{proof}

\begin{lem}
\label{lem:three} Let the matrices $A$, $B$, and $C$ be as in Theorem \ref%
{thm:one}, and assume that the matrix $X$ is a least squares solution to the
inconsistent matrix equation $AXB=C$. Then the matrix $X$ can be written in
the form
\[
X={A^{\dag }}C{B^{\dag }}+H-{A^{\dag }}AHB{B^{\dag }}
\]%
for some matrix $H\in {\mathcal{R}^{n\times p}}$.
\end{lem}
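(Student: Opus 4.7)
The plan is to translate the matrix equation into a vectorized linear system and then invoke the standard parametrization of least squares solutions for ordinary linear systems. Concretely, applying the $vec$ operator to the matrix equation $AXB=C$ yields, via the identity $vec(AXB)=(B^T\otimes A)vec(X)$, the linear system
\[
(B^T\otimes A)\,x = c,
\]
where $x=vec(X)$ and $c=vec(C)$. Moreover, $\|AXB-C\|=\|(B^T\otimes A)x-c\|$ because the Frobenius norm of a matrix equals the Euclidean norm of its $vec$, so $X$ is a least squares solution of $AXB=C$ if and only if $x$ is a least squares solution of this linear system.

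Next, I would invoke the standard parametric description of the least squares solution set of $(B^T\otimes A)x=c$ (this is the cited Theorem 6.3.2 in Graybill, and is the same formula used in the proof of Lemma \ref{lem:two}): every such $x$ has the form
\[
x = (B^T\otimes A)^{\dag}c + \bigl[I-(B^T\otimes A)^{\dag}(B^T\otimes A)\bigr]h
\]
for some $h\in\mathcal{R}^{np\times 1}$.

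The remaining step is to un-vectorize. Here I would use the Kronecker-product identities $(B^T\otimes A)^{\dag}=(B^T)^{\dag}\otimes A^{\dag}=(B^{\dag})^T\otimes A^{\dag}$ together with $vec(PHQ)=(Q^T\otimes P)vec(H)$. With $h=vec(H)$, these give
\[
(B^T\otimes A)^{\dag}c = vec\bigl(A^{\dag}CB^{\dag}\bigr),
\qquad
(B^T\otimes A)^{\dag}(B^T\otimes A)h = vec\bigl(A^{\dag}AHBB^{\dag}\bigr),
\]
so applying the inverse $vec$ to the parametric formula yields exactly
\[
X = A^{\dag}CB^{\dag} + H - A^{\dag}AHBB^{\dag},
\]
as required.

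The only substantive step is the third one, namely correctly handling the Kronecker-product identities for the Moore--Penrose inverse and for $vec$; the first two steps are essentially bookkeeping. Once those identities are applied, the claim follows immediately and the argument closely parallels (indeed, essentially reuses) the linear-system content of Lemma \ref{lem:two}.
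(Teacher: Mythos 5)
Your proposal is correct and follows essentially the same route as the paper: pass to the vectorized system $(B^T\otimes A)x=c$, invoke the standard parametrization of its least squares solutions (the paper cites Theorems 6.3.2 and 7.6.3 of Graybill for exactly this), and un-vectorize via the Kronecker/$vec$ identities. You merely make explicit the identities $(B^T\otimes A)^{\dag}=(B^{\dag})^T\otimes A^{\dag}$ and $vec(PHQ)=(Q^T\otimes P)vec(H)$ that the paper leaves implicit.
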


\begin{proof}
The proof is immediately follows from Theorem 6.3.2 and Theorem 7.6.3 in
\cite{Graybill} considering the usual system of linear equations ${{\left( {{%
B^{T}}\otimes A}\right) }}x=c$ instead of the matrix equations $AXB=C$
because the former and the latter are equivalent.
\end{proof}

\begin{thm}
\label{thm:two} Let the matrix equation $AXB=C$ be inconsistent, where $A\in
{\mathcal{R}^{m\times n}}$ , $B\in {\mathcal{R}^{p\times r}}$, $C\in {%
\mathcal{R}^{m\times r}}$ are known nonzero matrices and $X\in {\mathcal{R}%
^{n\times p}}$ is the matrix of unknowns. Then, for a given matrix $X_{0}\in
{\mathcal{\ R}^{n\times p}}$, the matrix $\hat{X}\in {S_{E}} $ satisfying
\[
\left\Vert {\hat{X}-{X_{0}}}\right\Vert =\mathop {min}\limits_{X\in {S_{E}}%
}\left\Vert {X-{X_{0}}}\right\Vert
\]%
is given by
\[
\hat{X}={A^{\dag }}C{B^{\dag }}+{X_{0}}-{A^{\dag }}A{X_{0}}B{B^{\dag }}.
\]
\end{thm}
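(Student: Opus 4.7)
The plan is to mirror the proof of Theorem \ref{thm:one} line by line, replacing the use of Lemma \ref{lem:one} with Lemma \ref{lem:two} and the parametric description of $S_{G}$ (from \cite{Rao}) with the parametric description of $S_{E}$ provided by the newly established Lemma \ref{lem:three}. The key observation is that Lemma \ref{lem:three} gives elements of $S_{E}$ exactly the same parametric form as the one used in the consistent case, namely $X = A^{\dag }CB^{\dag } + H - A^{\dag }AHBB^{\dag }$, so the only thing that genuinely differs between the two theorems is whether we invoke Lemma \ref{lem:one} or Lemma \ref{lem:two} at the final step; since both lemmas produce identical closed-form expressions, the resulting $\hat{X}$ must also be identical.

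First, I would take an arbitrary $X \in S_{E}$ and, using Lemma \ref{lem:three}, write $X = A^{\dag }CB^{\dag } + H - A^{\dag }AHBB^{\dag }$ for some $H \in \mathcal{R}^{n\times p}$. Next I would pass to the vectorized form: applying $vec$ and using the identity $vec(AHB) = (B^{T}\otimes A)vec(H)$ together with the Kronecker product rule $(B^{T}\otimes A)^{\dag } = (B^{\dag })^{T}\otimes A^{\dag }$, this parametrization becomes
\[
x = (B^{T}\otimes A)^{\dag }c + \bigl[I - (B^{T}\otimes A)^{\dag }(B^{T}\otimes A)\bigr]h,
\]
which is exactly the parametric form of the least squares solution set of the linear system $(B^{T}\otimes A)x = c$ considered in Lemma \ref{lem:two}. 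Then, since minimizing $\|X - X_{0}\|$ over $S_{E}$ is the same as minimizing $\|x - x_{0}\|$ over the vectorized least squares set, Lemma \ref{lem:two} yields
\[
\hat{x} = (B^{T}\otimes A)^{\dag }c + \bigl[I - (B^{T}\otimes A)^{\dag }(B^{T}\otimes A)\bigr]x_{0}.
\]
Finally, devectorizing using the same Kronecker identities as in Theorem \ref{thm:one} gives the claimed formula $\hat{X} = A^{\dag }CB^{\dag } + X_{0} - A^{\dag }AX_{0}BB^{\dag }$.

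There is no real obstacle here: the substantive work has already been done in Lemmas \ref{lem:two} and \ref{lem:three}, and the Kronecker/vec bookkeeping is the same as in the proof of Theorem \ref{thm:one}. The only conceptual point worth noting, which the authors in fact flag in the remark following Lemma \ref{lem:two}, is that Lemmas \ref{lem:one} and \ref{lem:two} have identical closed-form output despite concerning different solution sets; this is what makes the formula for $\hat{X}$ in Theorem \ref{thm:two} agree with that of Theorem \ref{thm:one}.
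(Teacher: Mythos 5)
Your proposal is correct and follows essentially the same route as the paper: the authors likewise invoke Lemma \ref{lem:three} to parametrize $S_{E}$, pass to the vectorized system $\left( B^{T}\otimes A\right) x=c$, and finish via Lemma \ref{lem:two} exactly as in the proof of Theorem \ref{thm:one}. Your write-up merely spells out the vec/Kronecker bookkeeping that the paper leaves implicit.
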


\begin{proof}
By Lemma \ref{lem:three}, any least squares solutions of the inconsistent
matrix equation $AXB=C$ is in the form
\[
X={A^{\dag }}C{B^{\dag }}+H-{A^{\dag }}AHB{B^{\dag }}
\]%
for some matrix $H\in {\mathcal{R}^{n\times p}}$. Hence, in the framework of
Lemma \ref{lem:two}, the proof is easily completed by proceeding as in the
proof of Theorem \ref{thm:one}.
\end{proof}

\section{The Symmetric and Skew--Symmetric Solutions of Problems \protect\ref%
{prob1} and \protect\ref{prob2}}

Now, suppose that the symmetric solutions of Problems \ref{prob1} and \ref%
{prob2} are required. To do this, the pair of matrix equations
\[
\begin{array}{l}
AXB=C \\
{B^{T}}X{A^{T}}={C^{T}}%
\end{array}%
\]%
or, equivalently, the usual system of linear equations
\[
\left[ {\
\begin{array}{c}
{{B^{T}}\otimes A} \\
{A\otimes {B^{T}}}%
\end{array}%
}\right] x=\left[ {\
\begin{array}{c}
{{c_{1}}} \\
{{c_{2}}}%
\end{array}%
}\right]
\]%
instead of the matrix equation $AXB=C$ is taken with ${c_{1}}=vec\left(
C\right) $ and ${c_{2}}=vec\left( C^{T}\right) $. Then, for a given matrix ${%
\ \ X_{0}}\in S{\mathcal{R}^{n\times n}}$, in the framework of Theorems \ref%
{thm:one} and \ref{thm:two} the solution matrix $\hat{X} \in S{\mathcal{R}%
^{n\times n}}$ is obtained by using $\hat{x}=vec\left( {\hat{X}}\right) $
given by
\begin{equation}
\hat{x}={\left[ {\
\begin{array}{c}
{{B^{T}}\otimes A} \\
{A\otimes {B^{T}}}%
\end{array}%
}\right] ^{\dag }}\left[ {\
\begin{array}{c}
{{c_{1}}} \\
{{c_{2}}}%
\end{array}%
}\right] +{x_{0}}-{\left[ {\
\begin{array}{c}
{{B^{T}}\otimes A} \\
{A\otimes {B^{T}}}%
\end{array}%
}\right] ^{\dag }}\left[ {\
\begin{array}{c}
{{B^{T}}\otimes A} \\
{A\otimes {B^{T}}}%
\end{array}%
}\right] {x_{0}}.  \label{eq:coz1}
\end{equation}

\begin{rmk}
\label{rem:one} If the matrix $X_{0}$ is not symmetric, then the matrix ${%
\frac{1}{2}\left( {{X_{0}}+X_{0}^{T}}\right) }$, instead of the matrix $%
X_{0} $, is taken to find the symmetric solutions of Problems \ref{prob1} or %
\ref{prob2}. The reason for this is that the minimization problem%
\[
\min \left\Vert {X-{X_{0}}}\right\Vert
\]%
is equivalent the minimization problem
\[
\min \left\Vert {X-\frac{1}{2}\left( {{X_{0}}+X_{0}^{T}}\right) }\right\Vert
\]%
over the subset $S_{G}$ or $S_{E}$ in $S{\mathcal{R}^{n\times n}}$ since
\[
{\left\Vert {X-{X_{0}}}\right\Vert ^{2}}={\left\Vert {X-\frac{1}{2}\left( {{%
X_{0}}+X_{0}^{T}}\right) }\right\Vert }^{2}+{\left\Vert {\frac{1}{2}\left( {{%
X_{0}}-X_{0}^{T}}\right) }\right\Vert }^{2},\forall {X}\in S{\mathcal{R}%
^{n\times n}}.
\]%
(for example, see \cite{Liao, YXPeng, ZYPeng2} for details).
\end{rmk}

Now suppose that the skew--symmetric solutions of Problems \ref{prob1} and %
\ref{prob2} are required. To do this, the pair of matrix equations
\[
\begin{array}{l}
AXB=C \\
{B^{T}}X{A^{T}}=-{C^{T}}%
\end{array}%
\]%
or, equivalently, the usual system of linear equations
\[
\left[ {%
\begin{array}{c}
{{B^{T}}\otimes A} \\
{A\otimes {B^{T}}}%
\end{array}%
}\right] x=\left[ {%
\begin{array}{c}
{{c_{1}}} \\
-{{c_{2}}}%
\end{array}%
}\right]
\]%
instead of the matrix equation $AXB=C$ is taken where ${c_{1}}$ and ${c_{2}}$
are as in Remark \ref{rem:one}. Then, for a given matrix ${X_{0}}\in SS{%
\mathcal{R}^{n\times n}}$, the solution matrix $\hat{X}\in SS{\mathcal{R}%
^{n\times n}}$ is obtained by using $\hat{x} =vec\left( {\hat{X}}\right)$
given by
\begin{equation}
\hat{x}={\left[ {%
\begin{array}{c}
{{B^{T}}\otimes A} \\
{A\otimes {B^{T}}}%
\end{array}%
}\right] ^{\dag }}\left[ {%
\begin{array}{c}
{{c_{1}}} \\
-{{c_{2}}}%
\end{array}%
}\right] +{x_{0}}-{\left[ {%
\begin{array}{c}
{{B^{T}}\otimes A} \\
{A\otimes {B^{T}}}%
\end{array}%
}\right] ^{\dag }}\left[ {%
\begin{array}{c}
{{B^{T}}\otimes A} \\
{A\otimes {B^{T}}}%
\end{array}%
}\right] {x_{0}}.  \label{eq:coz2}
\end{equation}

\begin{rmk}
\label{rem:two} If the matrix $X_{0}$ is not skew--symmetric, then the
matrix ${\frac{1}{2}\left( {{X_{0}}-X_{0}^{T}}\right) }$\ instead of the
matrix $X_{0}$ is taken to find the skew--symmetric solutions of the
Problems \ref{prob1} or \ref{prob2}. The reason for this is that the
minimization problem%
\[
\min \left\Vert {X-{X_{0}}}\right\Vert
\]%
is equivalent the minimization problem
\[
\min \left\Vert {X-\frac{1}{2}\left( {{X_{0}}-X_{0}^{T}}\right) }\right\Vert
\]%
over the subset $S_{G}$ or $S_{E}$ in $SS{\mathcal{R}^{n\times n}}$ since
\[
{\left\Vert {X-{X_{0}}}\right\Vert ^{2}}={\left\Vert {X-\frac{1}{2}\left( {{%
X_{0}}-X_{0}^{T}}\right) }\right\Vert }^{2}+{\left\Vert {\frac{1}{2}\left( {{%
X_{0}}+X_{0}^{T}}\right) }\right\Vert }^{2},\forall {X}\in SS{\mathcal{R}%
^{n\times n}},
\]%
(for example, see \cite{Huang4} for details).
\end{rmk}

Note that the structures of the general solution and a least squares
solution of the matrix equation $AXB=C$ when the equation is consistent and
inconsistent, respectively, are exactly the same. Therefore, the structures
of the solutions of Problems \ref{prob1} and \ref{prob2} are the same, too.
The former and the latter facts are immediately seen from Theorem \ref%
{thm:one} and \ref{thm:two}, respectively, together with Lemma \ref%
{lem:three} in the framework of Lemmas \ref{lem:one} and \ref{lem:two}.

We conclude the paper by giving a few numerical examples. As it was
mentioned earlier, the matrices in the examples have been taken from
related reference work articles. It is seen that the solutions are %
exactly the same with four decimal digits as those in the works cited. %
All the computations have been performed using Matlab 7.5.

\begin{example}
\cite[Example 4]{Huang4}. Consider the skew--symmetric solution of Problem \ref
{prob1}, where
\[
A=\left[ {%
\begin{array}{ccccc}
1 & 3 & {-5} & 7 & {-9} \\
2 & 0 & 4 & 6 & {-1} \\
0 & {-2} & 9 & 6 & {-8} \\
3 & 6 & 2 & {27} & {-13} \\
{-5} & 5 & {-22} & {-1} & {-11} \\
8 & 4 & {-6} & {-9} & {-19}%
\end{array}%
}\right], \ \ B=\left[ {%
\begin{array}{ccccc}
4 & 0 & 8 & {-5} & 4 \\
{-1} & 5 & 0 & {-2} & 3 \\
4 & {-1} & 0 & 2 & 5 \\
0 & 3 & 9 & 2 & {-6} \\
{-2} & 7 & {-8} & 1 & {11}%
\end{array}%
}\right] ,
\]

\[
C=\left[ {%
\begin{array}{ccccc}
{171} & {-537} & {74} & {-29} & {-281} \\
{142} & {-278} & {212} & {-92} & {-150} \\
{196} & {-523} & {-59} & {-111} & {24} \\
{661} & {-1507} & {922} & {-234} & {-1003} \\
{-39} & {-192} & {-207} & {186} & {-227} \\
{-165} & {-292} & {-1154} & {76} & {422}%
\end{array}%
}\right], \ \ {X_{0}}=\left[ {%
\begin{array}{ccccc}
1 & 0 & 4 & {-1} & 0 \\
5 & 3 & 2 & 7 & 4 \\
{-1} & {-2} & 0 & {-1} & 0 \\
2 & 6 & 1 & 8 & {-4} \\
0 & 3 & 1 & 4 & 2%
\end{array}%
}\right] .
\]
By the formula (\ref{eq:coz2}) in the framework Remark \ref{rem:two}, the
skew--symmetric solution matrix is obtained as

\[
\hat{X}=\left[ {%
\begin{array}{ccccc}
{0.0000} & {2.0000} & {-1.0000} & {-2.0000} & {0.0000} \\
{-2.0000} & {0.0000} & {2.0000} & {1.0000} & {-4.0000} \\
{1.0000} & {-2.0000} & {0.0000} & {-1.0000} & {-0.0000} \\
{2.0000} & {-1.0000} & {1.0000} & {0.0000} & {-4.0000} \\
{0.0000} & {4.0000} & {-0.0000} & {4.0000} & {0.0000}%
\end{array}%
}\right] .
\]
\end{example}

\begin{example}
\cite[Example 1]{Liao}. Consider the symmetric solution of Problem \ref%
{prob2} where
\[
A=\left( {%
\begin{array}{cc}
{E_{55} } & {Z_{54}} \\
{Z_{45}} & {P_{4}}%
\end{array}%
}\right), \ \ B=\left( {%
\begin{array}{cc}
{K_{4}} & {Z_{45}} \\
{Z_{54}} & {Z_{55}}%
\end{array}%
}\right),
\]

\[
C=\left( {%
\begin{array}{cc}
{T_{4}} & {Z_{45}} \\
{Z_{54}} & {H_{5}}%
\end{array}%
}\right) ,\ \ {X_{0}}=\left( {%
\begin{array}{cc}
{I_{4}} & {{\textstyle{\frac{\mathrm{1}}{\mathrm{2}}}}{E_{45}} } \\
{{\textstyle{\frac{\mathrm{1}}{\mathrm{2}}}}{E_{54}} } & {I_{5}}%
\end{array}%
}\right) .
\]

Here $E_{mn}$ and $Z_{mn}$\ are $m\times n$ matrices whose all entries $1$
and $0,$ respectively, and $P_{4}$ and $H_{5}$ denote the $4\times 4$ symmetric
Pascal matrix and the $5\times 5$ Hilbert matrix, respectively. Moreover, $%
T_{4}$ and $K_{4}$ are Toeplitz and Hankel matrices given by

\[ T_{4}=\left[
\begin{array}{cccc}
1 & 2 & 3 & 4 \\
2 & 1 & 2 & 3 \\
3 & 2 & 1 & 2 \\
4 & 3 & 2 & 1%
\end{array}%
\right] \textit{ and }
K_{4}=\left[
\begin{array}{cccc}
1 & 2 & 3 & 4 \\
2 & 3 & 4 & 0 \\
3 & 4 & 0 & 0 \\
4 & 0 & 0 & 0%
\end{array}%
\right] ,
\]
respectively. By the formula (\ref{eq:coz1}) the symmetric
solution matrix is obtained as

\[
\hat{X}=\left[ {%
\begin{array}{ccccccccc}
{0.8258} & {-0.2692} & {-0.2480} & {-0.2214} & {0.4129} & 0 & 0 & 0 & 0 \\
{-0.2692} & {0.6358} & {-0.3430} & {-0.3164} & {0.3179} & 0 & 0 & 0 & 0 \\
{-0.2480} & {-0.3430} & {0.6783} & {-0.2952} & {0.3391} & 0 & 0 & 0 & 0 \\
{-0.2214} & {-0.3164} & {-0.2952} & {0.7314} & {0.3657} & 0 & 0 & 0 & 0 \\
{0.4129} & {0.3179} & {0.3391} & {0.3657} & 1 & 0 & 0 & 0 & 0 \\
0 & 0 & 0 & 0 & 0 & 1 & 0 & 0 & 0 \\
0 & 0 & 0 & 0 & 0 & 0 & 1 & 0 & 0 \\
0 & 0 & 0 & 0 & 0 & 0 & 0 & 1 & 0 \\
0 & 0 & 0 & 0 & 0 & 0 & 0 & 0 & 1%
\end{array}%
}\right] .
\]
\end{example}

\begin{example}
\cite[Example 3.1]{ZYPeng2}. Consider the symmetric solution of Problem \ref%
{prob2}, where
\[
A=\left[ {%
\begin{array}{ccccccc}
4 & 3 & {-1} & 3 & 1 & {-3} & 2 \\
3 & {-2} & 3 & {-4} & 3 & 2 & 1 \\
4 & 3 & {-1} & 3 & 1 & {-3} & 2 \\
3 & {-1} & 3 & {-1} & 3 & 2 & 1 \\
4 & 3 & {-1} & 3 & 1 & {-3} & 2 \\
3 & {-1} & 3 & {-1} & 3 & 2 & 1%
\end{array}%
}\right], \ \ B=\left[ {%
\begin{array}{cccccc}
{-3} & 4 & {-3} & {-3} & 4 & 4 \\
5 & {-3} & 5 & 5 & {-3} & {-3} \\
{-6} & 2 & {-6} & {-6} & 2 & 2 \\
{-8} & 4 & {-8} & {-8} & 4 & 4 \\
4 & {-5} & 4 & 3 & {-2} & {-7} \\
{-3} & 2 & {-3} & {-3} & 2 & 2 \\
{-1} & {-2} & {-1} & {-1} & {-2} & {-2}%
\end{array}%
}\right] ,
\]

\[
C=\left[ {%
\begin{array}{cccccc}
{43} & {-54} & {73} & {-54} & {51} & {-54} \\
{-31} & {37} & {-61} & {37} & {-53} & {37} \\
{43} & {-54} & {73} & {-54} & {51} & {-54} \\
{-31} & {37} & {-61} & {37} & {-53} & {37} \\
{47} & {-54} & {73} & {-54} & {21} & {-54} \\
{-31} & {27} & {-61} & {27} & {-53} & {27}%
\end{array}%
}\right], \ \ {X_{0}}=\left[ {%
\begin{array}{ccccccc}
{-1} & 2 & {-3} & 2 & {-1} & 1 & 3 \\
2 & {-1} & 3 & {-3} & 2 & {-3} & 4 \\
{-3} & 3 & {-3} & 3 & {-2} & 1 & {-1} \\
2 & {-3} & 3 & 2 & 2 & 2 & 4 \\
{-3} & 2 & {-2} & 2 & {-1} & 3 & {-3} \\
4 & 3 & 1 & 1 & 2 & 1 & 1 \\
1 & {-2} & 1 & 3 & 4 & {-1} & 1%
\end{array}%
}\right] .
\]
By the formula (\ref{eq:coz1}) in the framework Remark \ref{rem:one}, the
symmetric solution matrix is obtained as

\[
\hat{X}=\left[ {%
\begin{array}{ccccccc}
{\mathrm{1}\mathrm{.7699}} & {\mathrm{1}\mathrm{.8581}} & {\mathrm{-3}%
\mathrm{.5455}} & {\mathrm{2}\mathrm{.8924}} & {\mathrm{0}\mathrm{.5920}} & {%
\mathrm{0}\mathrm{.8523}} & {\mathrm{2}\mathrm{.3693}} \\
{\mathrm{1}\mathrm{.8581}} & {\mathrm{-0}\mathrm{.6722}} & {\mathrm{1}%
\mathrm{.8908}} & {\mathrm{-1}\mathrm{.9156}} & {\mathrm{3}\mathrm{.1173}} &
{\mathrm{0}\mathrm{.5698}} & {\mathrm{-1}\mathrm{.0561}} \\
{\mathrm{-3}\mathrm{.5455}} & {\mathrm{1}\mathrm{.8908}} & {\mathrm{-0}%
\mathrm{.5812}} & {\mathrm{1}\mathrm{.3562}} & {\mathrm{-3}\mathrm{.9861}} &
{\mathrm{1}\mathrm{.7472}} & {\mathrm{2}\mathrm{.2490}} \\
{\mathrm{2}\mathrm{.8924}} & {\mathrm{-1}\mathrm{.9156}} & {\mathrm{1}%
\mathrm{.3562}} & {\mathrm{-3}\mathrm{.8543}} & {\mathrm{-0}\mathrm{.6224}}
& {\mathrm{0}\mathrm{.3241}} & {\mathrm{3}\mathrm{.6833}} \\
{\mathrm{0}\mathrm{.5920}} & {\mathrm{3}\mathrm{.1173}} & {\mathrm{-3}%
\mathrm{.9861}} & {\mathrm{-0}\mathrm{.6224}} & {\mathrm{-2}\mathrm{.3618}}
& {\mathrm{-2}\mathrm{.2044}} & {\mathrm{3}\mathrm{.2716}} \\
{\mathrm{0}\mathrm{.8523}} & {\mathrm{0}\mathrm{.5698}} & {\mathrm{1}\mathrm{%
.7472}} & {\mathrm{0}\mathrm{.3241}} & {\mathrm{-2}\mathrm{.2044}} & {%
\mathrm{-0}\mathrm{.0556}} & {\mathrm{2}\mathrm{.7992}} \\
{\mathrm{2}\mathrm{.3693}} & {\mathrm{-1}\mathrm{.0561}} & {\mathrm{2}%
\mathrm{.2490}} & {\mathrm{3}\mathrm{.6833}} & {\mathrm{3}\mathrm{.2716}} & {%
\mathrm{2}\mathrm{.7992}} & {\mathrm{0}\mathrm{.0308}}%
\end{array}%
}\right] .
\]
\end{example}

\bigskip

\end{document}